\newtheorem{theorem}{Theorem}[section]
\newtheorem{corollary}[theorem]{Corollary}
\newtheorem{remark}[theorem]{Remark}
\begin{document}
\baselineskip=15pt

\title[Higgs bundles on Gauduchon manifolds]{Stable Higgs 
bundles on compact Gauduchon manifolds}

\author[I. Biswas]{Indranil Biswas}

\address{School of Mathematics, Tata Institute of Fundamental
Research, Homi Bhabha Road, Bombay 400005, India}

\email{indranil@math.tifr.res.in}

\subjclass[2000]{53C07, 32L05}

\keywords{Gauduchon metric, stable Higgs bundle, representation}

\date{}

\begin{abstract}
Let $M$ be a compact complex manifold equipped with a
Gauduchon metric. If $TM$ is holomorphically trivial, and $(V\, 
, \theta)$ is a stable ${\rm SL}(r,{\mathbb C})$--Higgs bundle
on $M$, then we show that $\theta\,=\, 0$. We show that the
correspondence between Higgs bundles and representations of the
fundamental group for a compact K\"ahler manifold does not extend
to compact Gauduchon manifolds. This is done by applying the
above result to $\Gamma\backslash G$, where $\Gamma$ is a discrete
torsionfree cocompact subgroup of a complex semisimple group $G$.

\textsc{R\'esum\'e.} \textbf{Les fibr\'es de Higgs stables sur les
vari\'et\'es de Gauduchon.} Soit $M$ une vari\'et\'e complexe compacte
muni d'une m\'etrique de Gauduchon.
Si $TM$ est holomorphiquement trivial, et $(V, \theta)$ est un fibr\'e
$\text{SL}(r, \mathbb{C})$--Higgs stable, alors on d\'emontre que 
$\theta \,=\,0$.
On d\'emontre que la correspondance entre les fibr\'es de Higgs et les
repr\'esentations du groupe fondamental pour une vari\'et\'e k\"ahlerienne
compacte ne s'\'etend pas aux vari\'et\'es de Gauduchon.  Ceci est accompli
en appliquant le r\'esultat ci-dessus \`a $\Gamma \backslash G$, o\`u $\Gamma$
est un sous-groupe discret, sans torsion et co-compact d'un groupe semi-simple
complexe $G$.
\end{abstract}

\maketitle

\section{Introduction}\label{sec1}

Let $(M\, ,g)$ be a compact connected K\"ahler manifold.
A theorem due to Uhlenbeck and Yau says that the isomorphism classes
of stable vector bundles on $M$ are in bijective correspondence
with the solutions of the Hermitian--Yang--Mills equation on $M$
\cite{UY}. This theorem was later extended to compact complex
manifold equipped with a Gauduchon metric by Li--Yau and
Buchdahl \cite{LY}, \cite{Bu} (the result of \cite{Bu} is
for complex surfaces). Gauduchon metrics are a generalization
of K\"ahler metrics; their definition is recalled in Section
\ref{sec2}.

Hitchin, Simpson, Donaldson and Corlette established 
a bijective correspondence between the isomorphism classes
of stable ${\rm SL}(r,{\mathbb C})$--Higgs bundles on $M$,
with vanishing rational Chern classes, 
and the equivalence classes of irreducible homomorphisms from
$\pi_1(M)$ to ${\rm SL}(r,{\mathbb C})$
\cite{Hi}, \cite{Si}, \cite{Do}, \cite{Co}. It is natural to
ask whether this correspondence extends to compact complex Gauduchon
manifolds. We show that the correspondence does not
extend in general by constructing explicit examples of
compact complex Gauduchon manifolds
for which this correspondence fails.

Let $(M\, ,g)$ be a compact connected complex
Gauduchon manifold. We prove the following theorem:

\begin{theorem}\label{thm0}
If $TM$ is holomorphically trivial, and $(V\, ,
\theta)$ is a stable ${\rm SL}(r,{\mathbb C})$--Higgs bundle on
$M$, then $\theta\,=\, 0$.
\end{theorem}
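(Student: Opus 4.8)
The plan is to turn the holomorphic triviality of $TM$ into a description of $\theta$ as a commuting tuple of global holomorphic endomorphisms of $V$, to show that each of them is nilpotent, and then to reach a contradiction with stability by confronting the subsheaf and quotient formulations of slope. Concretely, I would fix a holomorphic trivialization of $TM$; dually this is a holomorphic frame $\omega_1,\dots,\omega_n$ of $\Omega^1_M$ (with $n=\dim_{\mathbb C}M$), and write $\theta=\sum_{i=1}^n\theta_i\otimes\omega_i$ with $\theta_i\in H^0(M,\mathrm{End}(V))$. The integrability condition $\theta\wedge\theta=0$ becomes $[\theta_i,\theta_j]=0$ for all $i,j$ (this identity is $\mathcal O_M$-bilinear, so it is immaterial that the $\omega_i$ need not be $d$-closed), and the $\mathrm{SL}$-condition $\mathrm{tr}(\theta)=0$ forces $\mathrm{tr}(\theta_i)=0$ for every $i$, the $\omega_i(x)$ being linearly independent at each point. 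Note also that $\mu(V)=0$, since $\det V$ is holomorphically trivial.

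Next I would show that each $\theta_i$ is nilpotent. The coefficients of the characteristic polynomial of $\theta_i$ are the holomorphic functions $\mathrm{tr}(\Lambda^k\theta_i)$ on the compact connected manifold $M$, hence constants, so $\theta_i(x)$ has a fixed multiset of eigenvalues $\mu_1,\dots,\mu_s$ with fixed algebraic multiplicities $m_1,\dots,m_s$. The associated spectral (Jordan) projections onto the generalized eigenspaces are polynomials in $\theta_i$ with coefficients independent of the point, hence global holomorphic idempotent endomorphisms of $V$; they split $V$ into a direct sum of the generalized eigen-subbundles of $\theta_i$. Since each $\theta_j$ commutes with $\theta_i$, it commutes with these idempotents, so this is a decomposition of $(V,\theta)$ into $\theta$-invariant subbundles. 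Were $\theta_i$ to have more than one eigenvalue, each summand would be a proper nonzero $\theta$-invariant subbundle, hence of negative slope by stability; but then $\mu(V)=0$ would be a convex combination, with positive weights, of the strictly negative slopes of the summands — impossible. So $\theta_i$ has a single eigenvalue, which must be $0$ because $\mathrm{tr}(\theta_i)=0$; that is, $\theta_i$ is nilpotent.

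Finally, suppose $\theta_i\neq0$ for some $i$ and set $\mathcal F:=\theta_i(V)$ and $\mathcal K:=\ker(\theta_i)$; both are coherent, and both are $\theta$-invariant subsheaves of $V$ because $\theta_j$ commutes with $\theta_i$. Since $\theta_i$ is nilpotent it is nowhere invertible, so $\mathrm{rk}(\mathcal F)\le r-1$; since $\theta_i\neq0$ and $V$ is torsion-free, $\mathrm{rk}(\mathcal F)\ge1$; and $\mathcal K$ is likewise a proper nonzero $\theta$-invariant subsheaf. Stability together with $\mu(V)=0$ then forces $\deg(\mathcal K)<0$ and $\deg(\mathcal F)<0$. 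On the other hand the first isomorphism theorem identifies $\mathcal F\cong V/\mathcal K$ as Higgs sheaves, so additivity of the degree on $0\to\mathcal K\to V\to\mathcal F\to0$ gives $\deg(\mathcal F)=\deg(V)-\deg(\mathcal K)=-\deg(\mathcal K)>0$ — a contradiction. Hence every $\theta_i$ vanishes, that is, $\theta=0$. The argument is essentially holomorphic; the Gauduchon hypothesis enters only to make the degree, and hence slope and stability, well defined. I expect the points needing care to be purely bookkeeping: that the spectral projections are genuinely global (a single polynomial serves at every point, because the minimal polynomial of $\theta_i(x)$ always divides the fixed polynomial $\prod_j(z-\mu_j)^{m_j}$), that the one coherent sheaf $\mathcal F$ occurs both as a subsheaf and as a quotient of $(V,\theta)$ so that its slope is unambiguous, and that Higgs stability may be tested against quotients just as well as against subsheaves.
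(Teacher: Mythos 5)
Your proposal is correct and follows essentially the same route as the paper: expand $\theta=\sum_i\theta_i\otimes\omega_i$ in a trivializing frame, use $\theta\wedge\theta=0$ to get commuting traceless endomorphisms, deduce from stability and the generalized eigenbundle decomposition that each $\theta_i$ is nilpotent, and then contradict stability via the exact sequence $0\to\ker(\theta_i)\to V\to\theta_i(V)\to 0$, whose kernel and image are both $\theta$-invariant subsheaves while degrees and ranks are additive. The only cosmetic differences are that you invoke $\mu(V)=0$ and a direct convex-combination argument where the paper splits off a single eigenbundle and cites indecomposability of stable Higgs bundles.
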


The above mentioned correspondence valid for K\"ahler manifolds
implies that if $(M\, ,g)$ is a compact connected K\"ahler manifold
such that $\theta\,=\, 0$
for any stable ${\rm SL}(r,{\mathbb C})$--Higgs bundle
$(V\, , \theta)$ on $M$, then any irreducible representation of
$\pi_1(M)$ into ${\rm SL}(r,{\mathbb C})$ is unitarizable
(see Remark \ref{rem1}).

Let $G$ be a connected complex semisimple group
defined over $\mathbb C$. Let
$$
\Gamma\, \subset\, G
$$
be a torsionfree discrete subgroup such that the 
quotient $\Gamma\backslash G$ is compact. This
compact complex manifold $\Gamma\backslash G$ is not
K\"ahler, but it has
explicit Gauduchon metrics. Also, $T(\Gamma\backslash G)$ is
trivial, so Theorem \ref{thm0} applies to it. It turns out that
the restriction of each nontrivial
irreducible representation of $G$ is a nonunitarizable
irreducible representation of $\Gamma$.

\section{Stable Higgs bundles}\label{sec2}

Let $M$ be a compact connected complex manifold of complex
dimension $d$. Let $g$ be a $C^\infty$ Hermitian
structure on the holomorphic tangent bundle $TM$.
Let $\omega_g$ be the positive $(1\, ,1)$--form on $M$ given
by $g$. We recall that $g$ is called a \textit{Gauduchon
metric} if
$$
\partial\overline{\partial} \omega^{d-1}_g\,=\, 0\, .
$$

A theorem due to P. Gauduchon says that given any $C^\infty$ Hermitian 
structure $g_0$ on $TM$, there is a positive smooth function $f$ on $M$
such that $fg_0$ is a Gauduchon metric; furthermore, if $n\, \geq\, 2$,
then $f$ is unique up to a positive constant. (See \cite[p. 502]{Ga}.)

Fix a Gauduchon metric $g$ on $M$. As before, the corresponding
$(1\, ,1)$--form on $M$ will be denoted by $\omega_g$.

Let $F$ be a coherent analytic sheaf on $F$. Consider the determinant
line bundle $\det F$ on $M$; see \cite[Ch. V, \S~6]{Ko} for the
construction of $\det F$. Fix a Hermitian structure $h_F$ on 
$\det F$. Define the \textit{degree} of $F$ to be
$$
\text{degree}(F)\, :=\, \int_M c_1(\det F\, ,h_F)\wedge
\omega^{d-1}_g\, \in\, \mathbb R\, ,
$$
where $c_1(\det F\, ,h_F)$ is the Chern form of the Hermitian
connection on $\det F$. It should be clarified that
$\text{degree}(F)$ is independent of the choice of $h_F$, but
it depends on $g$; see \cite[p. 626]{Bu}, \cite[p. 563]{LY}.

A holomorphic vector bundle $V$ on $M$ is called \textit{stable}
if for every coherent analytic subsheaf $F\, \subset\, V$ with
$0\, <\, \text{rank}(F)\, <\,\text{rank}(V)$, the inequality
$$
\frac{\text{degree}(F)}{\text{rank}(F)} \, 
<\,\frac{\text{degree}(V)}{\text{rank}(V)}
$$
holds.

Let $\Omega^1_M$ be the holomorphic cotangent bundle of $M$.
A \textit{Higgs field} on a holomorphic vector bundle $V$ on $M$
is a section
$$
\theta\, \in\, H^0(M,\, End(V)\otimes\Omega^1_M)
$$
such that $\theta\bigwedge\theta \,=\, 0$.
A Higgs vector bundle is a pair $(V\, ,\theta)$, where $V$ is
a holomorphic vector bundle, and $\theta$ is a Higgs field on $V$;
see \cite{Hi}, \cite{Si}.

A Higgs vector bundle $(V\, ,\theta)$ is called \textit{stable}
if for every coherent analytic subsheaf $F\, \subset\, V$ satisfying
the two conditions that
$\theta(F)\, \subset\, F\otimes \Omega^1_M$ and
$0\, <\, \text{rank}(F)\, <\,\text{rank}(V)$, the inequality
$$
\frac{\text{degree}(F)}{\text{rank}(F)} \,
<\,\frac{\text{degree}(V)}{\text{rank}(V)}
$$
holds.

Let ${\rm trace}\, :\, End(V)\otimes\Omega^1_M\,\longrightarrow\,
\Omega^1_M$ be the homomorphism defined by $\text{trace}\otimes
\text{Id}_{\Omega^1_M}$. So for a Higgs vector bundle $(V\, ,\theta)$,
$$
\text{trace}(\theta)\, \in\, H^0(M,\, \Omega^1_M)\, .
$$

A $\text{SL}(r,{\mathbb C})$--\textit{Higgs bundle} is a
Higgs vector bundle $(V\, ,\theta)$ of rank $r$ such that 
$\det V\, =\, {\mathcal O}_M$ (the trivial line bundle), and 
$\text{trace}(\theta)\,=\, 0$.

\begin{theorem}\label{thm1}
Assume that the tangent bundle $TM$ is holomorphically trivial. Let 
$(V\, , \theta)$ be a stable ${\rm SL}(r,{\mathbb C})$--Higgs bundle
on $M$. Then $\theta\,=\, 0$.
\end{theorem}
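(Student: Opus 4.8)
The plan is to use the stability of $(V,\theta)$ together with the triviality of $TM$ to show that $\theta$, viewed as a collection of commuting endomorphisms, must be ``scalar-free,'' and then to promote this to $\theta=0$ using the trace condition and the Hermitian--Yang--Mills machinery available for Gauduchon metrics. Since $TM$ is holomorphically trivial of rank $d$, a choice of global frame gives an isomorphism $\Omega^1_M\cong\mathcal O_M^{\oplus d}$, so the Higgs field $\theta$ decomposes as a $d$-tuple $(\theta_1,\dots,\theta_d)$ with each $\theta_i\in H^0(M,\mathrm{End}(V))$, and the integrability condition $\theta\wedge\theta=0$ translates into $[\theta_i,\theta_j]=0$ for all $i,j$. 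Thus the first step is purely linear-algebraic bookkeeping to replace $\theta$ by a commuting family of global endomorphisms of $V$.

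Next I would invoke the existence of a Hermitian--Yang--Mills (equivalently, Hermitian--Einstein) metric on the stable Higgs bundle $(V,\theta)$ for the Gauduchon metric $g$ — this is the Higgs-bundle analogue of the Li--Yau/Buchdahl theorem quoted in the introduction, applied to the structure group $\mathrm{SL}(r,\mathbb C)$ (so the Einstein factor is zero, since $\deg V=0$). With such a metric $h$ in hand, one forms the Bochner-type/Weitzenböck identity for the sections $\theta_i$: because each $\theta_i$ is a holomorphic section of $\mathrm{End}(V)$ and the Hermitian--Einstein condition controls the curvature of $\mathrm{End}(V)$, integrating $\partial\overline\partial$ of the pointwise norm $|\theta_i|^2_h$ against $\omega_g^{d-1}$ and using the Gauduchon condition $\partial\overline\partial\,\omega_g^{d-1}=0$ forces $\theta_i$ to be parallel, and in particular to have constant eigenvalues. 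The $\mathrm{SL}$ trace condition $\mathrm{trace}(\theta)=0$ then gives $\mathrm{trace}(\theta_i)=0$ for each $i$.

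Finally I would rule out nonzero parallel $\theta_i$ using stability. A parallel (hence holomorphic) endomorphism that is not a multiple of the identity would, via its generalized eigenspace decomposition, produce a $\theta$-invariant coherent subsheaf $F\subset V$ with $0<\mathrm{rank}(F)<\mathrm{rank}(V)$; since $\deg$ is additive and the metric is parallel along $F$, one computes $\mathrm{degree}(F)/\mathrm{rank}(F)=\mathrm{degree}(V)/\mathrm{rank}(V)=0$, contradicting stability. Hence each $\theta_i$ is a scalar, and being trace-free it vanishes, so $\theta=0$. The main obstacle I anticipate is the second step: one must be careful that the Hermitian--Einstein metric for a \emph{Higgs} bundle on a \emph{Gauduchon} manifold is genuinely available (the Higgs-bundle Li--Yau theorem), and that the Bochner argument goes through with only the weak Gauduchon condition rather than the Kähler identity — in particular that the relevant integration by parts has no boundary/curvature error terms beyond those killed by $\partial\overline\partial\,\omega_g^{d-1}=0$. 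An alternative for this step, avoiding the full Higgs Hitchin--Kobayashi correspondence, is to argue directly that a nonzero $\theta_i$ with nonconstant behaviour would still destabilize $V$ by an eigenvalue/characteristic-polynomial argument on the spectral data, but the metric route is cleaner if the requisite existence theorem is granted.
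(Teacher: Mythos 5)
Your first step (trivializing $\Omega^1_M$ and writing $\theta=\sum_i\theta_i\otimes\beta_i$ with $[\theta_i,\theta_j]=0$) is exactly how the paper begins, and your final destabilization argument is close in spirit to how it ends; but the analytic middle step is both the weakest link of your proposal and entirely unnecessary. As written, your proof hinges on (a) the existence of a Hermitian--Einstein metric for a \emph{Higgs} bundle over a \emph{Gauduchon} manifold, a Kobayashi--Hitchin-type theorem you do not prove and yourself flag as the main obstacle, and (b) a Higgs-version Weitzenb\"ock argument in which the curvature controlled by the Einstein condition is $F_h+[\theta,\theta^*]$ rather than $F_h$ alone, so the sign analysis needs the commutation $[\theta_i,\theta]=0$ and some care beyond the usual Gauduchon integration-by-parts trick. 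Until those are supplied, the argument has a genuine gap. Moreover, the only output you use from parallelism is that the eigenvalues of $\theta_i$ are constant, and that is free: the coefficients of the characteristic polynomial of $\theta_i$ are global holomorphic functions on the compact manifold $M$, hence constant.

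The paper's proof uses nothing but this elementary observation together with stability. If some $\theta_{i_0}\neq 0$, constancy of the spectrum gives a holomorphic generalized-eigenbundle decomposition $V=V_1\oplus V_1^c$ preserved by every $\theta_j$, hence by $\theta$; indecomposability of a stable Higgs bundle forces $\theta_{i_0}$ to have a single eigenvalue, which is $0$ because $\mathrm{trace}(\theta)=0$, so $\theta_{i_0}$ is nonzero and nilpotent. Then its kernel and image are $\theta$-invariant subsheaves of intermediate rank, stability gives both slopes strictly less than $\mu(V)$, and this contradicts additivity of degree and rank in $0\to\ker\theta_{i_0}\to V\to\mathrm{im}\,\theta_{i_0}\to 0$. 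Equivalently, within your own framework you could delete the metric step entirely: each $\theta_i$ commutes with $\theta$, i.e.\ is an endomorphism of the Higgs pair $(V,\theta)$, and the same kernel/image slope argument shows a stable Higgs bundle is simple, so each $\theta_i$ is a scalar and hence $0$ by trace-freeness. So your route could in principle be completed, but only by importing a heavy external existence theorem plus a delicate Bochner computation, whereas the paper's argument (and the shortcut just described) needs only the definition of the Gauduchon degree, its additivity, and the constancy of holomorphic functions on a compact complex manifold.
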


\begin{proof}
Fix a holomorphic trivialization of $\Omega^1_M$ by choosing
$d$ linearly independent sections
$$
\beta_i\, \in\, H^0(M,\, \Omega^1_M)
$$
$1\,\leq\, i\,\leq\, d$. Then
$$
\theta\, =\, \sum_{i=1}^d\theta_i\otimes \beta_i\, ,
$$
where $\theta_i\, \in\, H^0(M,\, End(V))$. Since
$\theta\bigwedge\theta \,=\, 0$,
\begin{equation}\label{e1}
\theta_i\circ\theta_j \,=\, \theta_j\circ\theta_i
\end{equation}
for all $i\, ,j \, \in\, [1\, ,d]$.

Assume that $\theta\, \not=\, 0$. Choose $i_0$ such that
$\theta_{i_0}\, \not=\, 0$.

For any point $x\, \in\, M$, let $\lambda_1(x)\, ,\cdots\, 
,\lambda_{n_x}(x)$
be the eigenvalues of $\theta_{i_0}(x)\, \in\, \text{End}_{\mathbb C}
(V_x)$; let $m^x_j$ be the multiplicity of the eigenvalue 
$\lambda_j(x)$. Since all holomorphic functions on $M$
are constants, the characteristic polynomial of $\theta_{i_0}(x)$
is independent of $x$. Hence the collection $\{(\lambda_1(x)\, ,m^x_1)
\, , \cdots\, , (\lambda_{n_x}(x)\, ,m^x_{n_x})\}$
is independent of $x$. Let $V_1\subset\, V$ be the
generalized eigenbundle for the
eigenvalue $\lambda_1(x)$ of $\theta_{i_0}$. So for each point
$x\, \in\, M$, the fiber of $V_1$ over $x$ is generalized eigenspace
of $\theta_{i_0}(x)$ for the eigenvalue $\lambda_1(x)$; it is a
holomorphic subbundle. Let $V^c_1\subset\, V$ be the
holomorphic subbundle given by the direct sum of the generalized 
eigenbundles for all the eigenvalues of $\theta_{i_0}$ different from
$\lambda_1(x)$. Therefore, we have a decomposition
$$
V\,=\, V_1\oplus V^c_1\, .
$$

{}From \eqref{e1} it follows immediately that $\theta_j(V_1)
\, \subset\, V_1$ and $\theta_j(V^c_1)\, \subset\, V^c_1$
for all $j$. Hence $\theta(V_1)\, \subset\,
V_1\otimes \Omega^1$ and $\theta(V^c_1)\, \subset\,V^c_1\otimes 
\Omega^1$. Therefore, if both $V_1$ and $V^c_1$ are nonzero, then
the Higgs bundle $(V\, , \theta)$ decomposes. But a
stable Higgs bundle is indecomposable. Since $(V\, , \theta)$
is stable, we conclude that 
$\theta_{i_0}$ has exactly one eigenvalue. On the other hand,
$\text{trace}(\theta)\,=\, 0$. Hence $0$ is the only
eigenvalue of $\theta_{i_0}$. So, $\theta_{i_0}$ is nilpotent.

Consider the short exact sequence of coherent analytic sheaves on $M$
\begin{equation}\label{e2}
0\, \longrightarrow\, \text{kernel}(\theta_{i_0})
\, \longrightarrow\, V \, \longrightarrow\,
\text{image}(\theta_{i_0})
\, \longrightarrow\, 0\, .
\end{equation}
{}From \eqref{e1} it follows that
$$
\theta_j(\text{kernel}(\theta_{i_0}))\, \subset\,
\text{kernel}(\theta_{i_0}) ~\,~\text{~and~} ~\,~
\theta_j(\text{image}(\theta_{i_0}))\, \subset\,
\text{image}(\theta_{i_0})
$$
for all $j$. Hence
\begin{equation}\label{e3}
\theta(\text{kernel}(\theta_{i_0}))\, \subset\,
\text{kernel}(\theta_{i_0})\otimes\Omega^1_M
~\,~\text{~and~} ~\,~\theta(\text{image}(\theta_{i_0}))\, \subset
\,\text{image}(\theta_{i_0})\otimes\Omega^1_M\, .
\end{equation}

Since $\theta_{i_0}$ is nonzero and nilpotent,
$$
0\, <\, \text{rank}(\text{kernel}(\theta_{i_0}))\, ,
\text{rank}(\text{image}(\theta_{i_0}))\, <\, r\, .
$$
In view of \eqref{e3}, the stability condition for
$(V\, ,\theta)$ says that
\begin{equation}\label{e4}
\frac{\text{degree}(\text{kernel}(\theta_{i_0}))
}{\text{rank}(\text{kernel}(\theta_{i_0}))}\, ,
\frac{\text{degree}(\text{image}(\theta_{i_0}))
}{\text{rank}(\text{image}(\theta_{i_0}))}\, <\,
\frac{\text{degree}(V)}{\text{rank}(V)}\, .
\end{equation}
On the other hand, from \eqref{e2},
$$
\text{degree}(\text{kernel}(\theta_{i_0}))+
\text{degree}(\text{image}(\theta_{i_0}))\,=\,
\text{degree}(V)
$$
and $\text{rank}(\text{kernel}(\theta_{i_0}))+
\text{rank}(\text{image}(\theta_{i_0}))\,=\,
\text{rank}(V)$.
But these contradict \eqref{e4}. Therefore, $\theta\, =\, 0$.
\end{proof}

Theorem \ref{thm1} has the following corollary:

\begin{corollary}\label{cor1}
If $TM$ is holomorphically trivial, and $(V\, ,
\theta)$ is a stable ${\rm SL}(r,{\mathbb C})$--Higgs bundle on
$M$, then the vector bundle $V$ is stable.
\end{corollary}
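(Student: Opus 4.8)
The plan is to deduce stability of $V$ from stability of the Higgs pair $(V\, ,0)$, which is exactly what Theorem \ref{thm1} gives us: since $\theta\,=\,0$, every coherent analytic subsheaf $F\,\subset\, V$ automatically satisfies $\theta(F)\,\subset\, F\otimes\Omega^1_M$, so the Higgs-stability inequality applies to \emph{all} subsheaves, not merely the $\theta$-invariant ones. This is precisely the definition of stability of $V$ as a plain vector bundle. So the corollary is essentially immediate once Theorem \ref{thm1} is in hand.

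More explicitly, I would argue as follows. Let $F\,\subset\, V$ be any coherent analytic subsheaf with $0\,<\,\text{rank}(F)\,<\,\text{rank}(V)$. Because $\theta\,=\,0$ by Theorem \ref{thm1}, the condition $\theta(F)\,\subset\, F\otimes\Omega^1_M$ holds trivially. Hence $F$ is one of the subsheaves appearing in the stability condition for the Higgs bundle $(V\, ,\theta)$, and that condition yields
$$
\frac{\text{degree}(F)}{\text{rank}(F)}\,<\,\frac{\text{degree}(V)}{\text{rank}(V)}\, .
$$
Since $F$ was arbitrary (subject only to the rank constraint), this is exactly the statement that $V$ is a stable vector bundle.

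There is essentially no obstacle here: the only content is the observation that the class of $\theta$-invariant subsheaves coincides with the class of all subsheaves when $\theta\,=\,0$, together with the invocation of Theorem \ref{thm1}. One should perhaps remark, for cleanliness, that the notion of degree used in both stability conditions is the same Gauduchon degree defined via $\omega_g^{d-1}$, so no compatibility issue arises. The proof is therefore a one-line reduction, and I would present it as such.
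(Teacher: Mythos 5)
Your proof is correct and is exactly the argument the paper intends (the paper states the corollary without proof, as an immediate consequence of Theorem \ref{thm1}): once $\theta\,=\,0$, every subsheaf is $\theta$-invariant, so Higgs stability of $(V\, ,0)$ is literally stability of $V$. Nothing further is needed.
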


Let $(M\, ,g)$ be a compact connected K\"ahler manifold.
All the Chern classes will be with rational coefficients.
There is a bijective correspondence between the isomorphism classes
of stable ${\rm SL}(r,{\mathbb C})$--Higgs bundles
$(V\, ,\theta)$ on $M$, with $c_i(V)\,=\, 0$ for all $i\, \geq\, 1$,
and the equivalence classes of irreducible homomorphisms from
$\pi_1(M)$ to ${\rm SL}(r,{\mathbb C})$ (see \cite{Si2} for the
details of this correspondence). Also, there is a bijective 
correspondence between the isomorphism classes of stable vector
bundles $V$ on $M$ of rank $r$ and trivial determinant, with
$c_i(V)\,=\, 0$ for all $i\, \geq\, 1$, and the equivalence classes
of irreducible homomorphisms from $\pi_1(M)$ to ${\rm SU}(r)$
(see \cite{Si2}). The first correspondence is an extension of
the second correspondence: The inclusion of ${\rm SU}(r)$ in
${\rm SL}(r,{\mathbb C})$ gives a map of homomorphisms, and
a stable vector bundles $V$ on $M$ of rank $r$ and trivial determinant
produces a stable ${\rm SL}(r,{\mathbb C})$--Higgs bundle by assigning
the zero Higgs field.

\begin{remark}\label{rem1}
{\rm Assume that the K\"ahler manifold $(M\, ,g)$ has the following
property: If $(V\, , \theta)$ is a stable ${\rm SL}(r,{\mathbb 
C})$--Higgs bundle, then $\theta\,=\, 0$. Take any
irreducible homomorphism
$$
\rho\, :\, \pi_1(M)\,\longrightarrow\,{\rm SL}(r,{\mathbb C})\, .
$$
Let $(V_\rho\, ,\theta_\rho)$ be the stable Higgs bundle corresponding 
to $\rho$. We have $\theta_\rho\,=\,0$ by the assumption on $M$. Hence 
$\rho$ is conjugate to a unitary representation, meaning there is an 
element $A\, \in\, {\rm SL}(r,{\mathbb C})$ such that $A^{-1}(\rho(z))A
\,\in\,{\rm SU}(r)$ for all $z\, \in\, \pi_1(M)$.}
\end{remark}

\section{An example}

Let $G$ be a connected semisimple affine algebraic group
defined over $\mathbb C$; we assume that $G\, \not=\, e$.
Let $d$ be the (complex) dimension of $G$. Let
$$
\Gamma\, \subset\, G
$$
be a torsionfree discrete subgroup such that $\Gamma\backslash G$
is compact. Since $\Gamma\backslash G$ is compact, the subgroup
$\Gamma$ is Zariski dense in $G$ \cite{Bo}. (See \cite{Ra} for
such manifolds.)

We note that there are explicit Gauduchon metrics on
the complex manifold $\Gamma\backslash G$. Indeed,
take any Hermitian metric $h$ on $\Gamma\backslash G$ given by
some left translation invariant Hermitian metric $\widetilde{h}$
on $G$. Let $\omega_h$ and $\omega_{\widetilde{h}}$ be the corresponding
$(1\, ,1)$--forms on $\Gamma\backslash G$ and $G$ respectively.
Since $\widetilde{h}$ is left translation invariant, the top degree
form $\partial\overline{\partial} \omega^{d-1}_{\widetilde{h}}$ is
also left translation invariant. Hence $\partial\overline{\partial} 
\omega^{d-1}_{\widetilde{h}}\,=\, c_0\cdot \mu_G$, where $c_0\, \in\, 
\mathbb R$,
and $\mu_G$ is the Haar measure form on $G$. The form
$\partial\overline{\partial} \omega^{d-1}_h$ is closed because
$\partial\overline{\partial} \omega^{d-1}_h\,=\,
d\overline{\partial} \omega^{d-1}_h$. Hence using Stokes' theorem,
$$
0\,=\, \int_{\Gamma\backslash G} \partial\overline{\partial}
\omega^{d-1}_h \,=\, \int_{\Gamma\backslash G} c_0\mu_G
\,=\, c_0\text{Vol}_{\mu_G}(\Gamma\backslash G)\, .
$$
Therefore, $c_0\,=\, 0$. Hence $h$ is a Gauduchon metric.

The holomorphic tangent bundle $T(\Gamma\backslash G)$ is trivial
(a trivialization is given by any left translation invariant 
trivialization of $TG$).

It can be shown that $\Gamma\backslash G$ does not admit any
K\"ahler metric. Indeed, any compact connected K\"ahler manifold 
with trivial tangent bundle is isomorphic to a complex torus,
implying that its fundamental group is abelian. But the
fundamental group $\Gamma$ of $\Gamma\backslash G$ is not abelian.
(Since $\Gamma$ is Zariski dense in $G$, if $\Gamma$ is
abelian, then $G$ is abelian.)

Take any nontrivial irreducible representation
$$
\rho'\, :\, G\, \longrightarrow\, \text{SL}(V_0)\, .
$$
Let
\begin{equation}\label{e5}
\rho\, :=\, \rho'\vert_\Gamma
\end{equation}
be the restriction of $\rho'$ to the subgroup $\Gamma$.

We have $\pi_1(\Gamma\backslash G)\,=\, \Gamma$. Since 
$\Gamma$ is Zariski dense in $G$, the restriction
$\rho$ in \eqref{e5} remains irreducible. Since
$$
\rho(\Gamma)\, \subset\, \text{SL}(V_0)
$$
is an infinite, closed and discrete subgroup, it cannot be
conjugated, by some element of $\text{SL}(V_0)$, to a
subgroup of a maximal compact subgroup of $\text{SL}(V_0)$
(every closed infinite subgroup of a compact group has a limit 
point, hence it is not discrete).
Compare this with Remark \ref{rem1}.

%%%%%%%%%%%%%%%%%%%%%%%%%%%%%%%%%%%%%%%%%%%%%%%%%%%%%%%%%%%%%%%%%

\end{document}